\newcommand{\lfrac}[2]{\mbox{\raisebox{-0.6pt}{{\large$\frac{#1}{#2}$}}}}
\newcommand{\lbin}[2]{\mbox{\raisebox{-0.6pt}{{\large$\binom{\;\!#1\;\!}{\;\!#2\;\!}$}}}}
\newtheorem*{theorem}{Theorem}
\begin{document}
\markboth{ \;\; \hrulefill \: Herbert M\"{o}ller
\,\hrulefill}
{\hrulefill \; Convergence and visualization of Laguerre's rootfinding algorithm 
\,\hrulefill \;\quad }
\vspace*{-20mm}
\begin{center}
\Large{\textbf{Convergence and visualization of \\Laguerre's rootfinding algorithm}}\\[7mm]
\large{Herbert M\"{o}ller}\footnote[1]{Math. Institute, Einsteinstr. 62, 48149 
M\"{u}nster\\
\hspace*{5.5mm}Email: herbert.moeller@uni-muenster.de\hfill\copyright\ H. M\"{o}ller 2015}
\end{center}

\emph{Version of January 9, 2015}

\textbf{Abstract} \ Laguerre's rootfinding algorithm is highly recommended although 
most of its properties are known only by empirical evidence. In view of this, we prove 
the first sufficient convergence criterion. It is applicable to simple roots of 
polynomials with degree greater than 3. The ``Sums of Powers Algorithm'' (SPA), 
which is a reliable iterative rootfinding method, can be used to fulfill the 
condition for each root. Therefore, Laguerre's method together with the SPA 
is now a reliable algorithm (LaSPA). In computational mathematics these results 
solve a central task which was first attacked by L. Euler 266 years ago. 

In order to study convergence properties, we eliminate the polynomial and its 
derivatives in the definition of the Laguerre iteration, replacing them by sums, 
only depending on the roots and the iterated values. For this iteration with roots, 
the above criterion of convergence represents an efficient stopping condition. 
In this way, we visualize convergence properties by coloring small neighbourhoods 
of each starting point in squares.

\textbf{Keywords}\ \ Polynomial rootfinder, zeros of polynomials, Laguerre's algorithm

\textbf{Mathematics Subject Classification}\ \ 65H05, 26C10\vspace{-4mm}

\section{The convergence criterion}\label{S:Int}

If $\varrho$ is a simple root of the polynomial $p(z),$ then it has been proved 
that Laguerre's method converges cubically to the limit $\varrho,$ whenever the 
initial guess is close enough to $\varrho.$ But it was not known how small the 
distance from the root must be. In the following theorem we determine for each 
simple root $\varrho$ a disk with centre $\varrho$ such that for all starting 
values in this neighbourhood, the convergence to the root is guaranteed. If all 
roots of $p(z)$ are simple, an a priori lower bound for the radii of all these 
disks is given. At the end of this section, we explain how the condition of the 
theorem can be reliably fulfilled with the SPA.
\begin{theorem}\label{T:Con}
Let $\,p(z)$ be a normalized polynomial with degree $m\ge 4$ and let $\mathrm{P}
{\,:\,=\,}\linebreak
\{\varrho_{1},\ldots,\varrho_{l}\}$ be the set of the roots. For $u_{0}\in
\mathbb{C}\setminus\mathrm{P}$ with $p'(u_{0})\ne 0$ or $p''(u_{0})\ne 0$ the 
\emph{Laguerre sequence} $L_{p}(u_{0})=\,:(u_{n})_{n\in\mathbb{N}}$ is defined 
recursively by
\begin{align}\label{eq:Moe1}
		\begin{split}
				& u_{n+1}=u_{n}-\frac{m}{q(u_{n})+s(u_{n})\,r(u_{n})},\ n\in\mathbb{N},  
				\mbox{\, with \,} q(z):\,=\frac{p'(z)}{p(z)},\\
				& r(z):\,=\sqrt{(m-1)\big(m{\;\!}t(z)-q^2(z)\big)},\ 
				t(z):\,=q^2(z)-\frac{p''(z)}{p(z)} \mbox{\, and}\\
				& s(z):\,={\setlength\arraycolsep{2pt}
				\left\{\begin{array}{rl}1& \text{when $\big(\mathrm{Re}\,q(z)\big)
				\big(\mathrm{Re}\,r(z)\big)+\big(\mathrm{Im}\,q(z)\big)
				\big(\mathrm{Im}\,r(z)\big)>0,$}\\
						-1& \text{else,}
					\end{array}\right.}
		\end{split}
\end{align}
as long as $q(u_{n})+s(u_{n})\,r(u_{n})\ne 0.$

If there is a simple root $\varrho\in \mathrm{P}$ 
such that
\begin{equation}\label{eq:Moe2}
		|u_{0}-\varrho{\:\!}|\le 
\frac{1}{2m-1}\min\big\{|\sigma-\varrho{\:\!}|\,\big|\, 
\sigma\in \mathrm{P}\setminus \{\varrho\}\big\},
\end{equation}
then $L_{p}(u_{0})$ converges to the limit $\varrho,$ and
\begin{equation}\label{eq:Moe3}
		|u_{n}-\varrho{\:\!}|< \lambda^n\,|u_{0}-\varrho{\:\!}| \mbox{\, with \,} 
		\lambda:\,=\frac{15}{16} 
\end{equation}
holds for all $n\in\mathbb{N}\setminus\{0\}$.

If all roots of $p(z)$ are simple, then in \eqref{eq:Moe2} $\mu_{\varrho}:\,=
\min\big\{|\sigma-\varrho{\:\!}|\,\big|\,\sigma\in \mathrm{P}\setminus 
\{\varrho\}\big\}$ can be replaced by the a 
priori lower bound
\[\Big(1+\lfrac{1}{|d_{0}|}\max\limits_{1\le j\le \binom{m}{2}}|d_{j}|
\Big)^{-\frac{1}{2}}<\min\{\mu_{\varrho}\mid \varrho\in\mathrm{P}\},\]
where $d_{j}$ are the coefficients of the polynomial $D_{p}(z):\,=\!
\prod\limits_{1\le i<k\le m}\!\!\big(z-(\varrho_{i}-\varrho_{k})^2\big)
{=\,:}\linebreak
\sum\limits_{j=0}^{\binom{m}{2}}d_{j}z^j.$ These coefficients can be calculated 
only using the coefficients of $p(z).$ All roots of $p(z)$ are simple if and 
only if $d_{0}\ne 0.$
\end{theorem}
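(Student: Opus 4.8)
The plan is to carry out the programme announced in the abstract: eliminate $p$ and its derivatives in favour of sums over the roots. Writing $p(z)=\prod_{i=1}^{l}(z-\varrho_i)^{m_i}$ with $\sum_i m_i=m$, one has $q(z)=\sum_i m_i/(z-\varrho_i)$ and, since $q'=p''/p-q^2$, also $t(z)=q^2(z)-p''(z)/p(z)=-q'(z)=\sum_i m_i/(z-\varrho_i)^2$. Fix a simple root $\varrho$ (so $m_\varrho=1$) satisfying \eqref{eq:Moe2}, set $d:=\mu_\varrho$, and for an iterate $u=u_n$ put $e:=u-\varrho$, $a:=1/e$, $\varepsilon:=|e|/d\le 1/(2m-1)$, and split off the $\varrho$-term:
\[q(u)=a+B_1,\quad t(u)=a^2+B_2,\quad B_1:=\!\!\sum_{\varrho_i\ne\varrho}\frac{m_i}{u-\varrho_i},\quad B_2:=\!\!\sum_{\varrho_i\ne\varrho}\frac{m_i}{(u-\varrho_i)^2}.\]
Since $|u-\varrho_i|\ge d-|e|$ for $\varrho_i\ne\varrho$ and $\sum_{\varrho_i\ne\varrho}m_i=m-1$, the quantities $\beta_1:=eB_1$, $\beta_2:=e^2B_2$ satisfy $|\beta_1|\le\kappa:=\frac{(m-1)\varepsilon}{1-\varepsilon}\le\frac12$ and $|\beta_2|\le\kappa^2/(m-1)$; both tend to $0$ with $\varepsilon$.

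The core of the argument is the analysis of the denominator. A direct computation gives $mt(u)-q^2(u)=(m-1)a^2(1+W)$ with $W:=\frac{-2\beta_1+m\beta_2-\beta_1^2}{m-1}$, so $r(u)=\pm(m-1)a\sqrt{1+W}$ (principal branch; $|W|<1$ in range). First I would show that the sign rule for $s$ selects $sr=(m-1)a\sqrt{1+W}$: unwinding the definition this reduces to $\mathrm{Re}\bigl(\overline{(1+\beta_1)}\sqrt{1+W}\bigr)>0$, which follows from $\sqrt{1+W}=1-\frac{\beta_1}{m-1}+O(\varepsilon^2)$, giving real part $1+\frac{m-2}{m-1}\mathrm{Re}\,\beta_1+O(\varepsilon^2)\ge 1-\frac{m-2}{m-1}\kappa+O(\varepsilon^2)>0$, all error terms being tamed by the explicit remainder bound $\bigl|\sqrt{1+W}-1-\frac{W}{2}\bigr|\le\frac{|W|^2}{8(1-|W|)}$. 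Granting this, $q(u)+s(u)r(u)=a\bigl[m+(m-1)(\sqrt{1+W}-1)+\beta_1\bigr]$, and substituting into \eqref{eq:Moe1} yields the exact error recursion
\[u_{n+1}-\varrho=e_n\cdot\frac{\delta_n}{1+\delta_n},\qquad \delta_n:=\frac{\beta_2}{2}-\frac{\beta_1^2}{2m}+\frac{m-1}{m}\Bigl(\sqrt{1+W}-1-\frac{W}{2}\Bigr).\]
The decisive — and, I expect, hardest — point is that the term $-\beta_1/m$ produced by the linear part of $\frac{m-1}{m}\cdot\frac{W}{2}$ exactly cancels the $+\beta_1/m$ coming from the $\beta_1$ in the bracket, so that $\delta_n$ is genuinely quadratic in $\varepsilon$. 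The bounds on $\beta_1,\beta_2,W$ then give $|\delta_n|\le\frac{\kappa^2}{2(m-1)}+\frac{\kappa^2}{2m}+\frac{m-1}{m}\cdot\frac{|W|^2}{8(1-|W|)}$, which for $m\ge4$ and $\varepsilon\le1/(2m-1)$ stays comfortably below $1$ (in the worst case $m=4$ one gets roughly $0.13$), so $\dfrac{|\delta_n|}{|1+\delta_n|}\le\dfrac{|\delta_n|}{1-|\delta_n|}\le\lambda=\frac{15}{16}$ and in particular $q(u_n)+s(u_n)r(u_n)\ne0$.

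The induction then closes: from $|e_1|\le\lambda|e_0|<|e_0|\le d/(2m-1)$ and the invariance of $\mu_\varrho$, \eqref{eq:Moe2} again holds at $u_1$, so the argument repeats and gives $|u_{n+1}-\varrho|\le\lambda|u_n-\varrho|$ for all $n$, hence \eqref{eq:Moe3} and convergence to $\varrho$ (if ever $e_n=0$ the sequence has already reached $\varrho$). For the final assertion, note that when all roots are simple the distinct roots are $\varrho_1,\dots,\varrho_m$, so $(\min_\varrho\mu_\varrho)^2=\min_{i<k}|\varrho_i-\varrho_k|^2$ is the smallest modulus among the roots $(\varrho_i-\varrho_k)^2$ of the monic polynomial $D_p$, none of which vanishes iff $d_0=D_p(0)=(-1)^{\binom{m}{2}}\prod_{i<k}(\varrho_i-\varrho_k)^2\ne0$, i.e. iff $p$ has no repeated root. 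Cauchy's root bound applied to the reversed polynomial $z^{\binom{m}{2}}D_p(1/z)=d_0z^{\binom{m}{2}}+\cdots+d_{\binom{m}{2}}$ (leading coefficient $d_0\ne0$) shows every root $1/\zeta$ of it obeys $|1/\zeta|<1+\frac{1}{|d_0|}\max_{1\le j\le\binom{m}{2}}|d_j|$; taking $\zeta$ of least modulus and then square roots gives the stated inequality. Finally each $d_j$ is a symmetric function of $(\varrho_1,\dots,\varrho_m)$ — the multiset $\{(\varrho_i-\varrho_k)^2\}_{i<k}$ is permutation-invariant — hence a polynomial in the elementary symmetric functions of the roots, i.e. in the coefficients of $p$ (equivalently, $D_p(z^2)=z^{-m}\mathrm{Res}_x(p(x),p(x+z))$). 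Apart from the sign-rule verification and the sharp estimate of $\delta_n$, everything here is routine bookkeeping.
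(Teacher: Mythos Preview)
Your argument is correct, and the overall architecture (rewriting $q,t$ via partial fractions, factoring out $a=1/(u-\varrho)$, bounding $|\beta_1|\le\tfrac12$, determining the sign of $s$, then closing by induction) matches the paper's. The genuine difference lies in how you extract the contraction factor from $w=\tfrac1m\bigl(1+\beta_1+(m-1)\sqrt{1+W}\bigr)$.

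The paper does not expand $\sqrt{1+W}$ at all. It writes the radicand as $(m-1)(c+id)$, uses the explicit formula $\sqrt{c+id}=\sqrt{\tfrac12(\sqrt{c^2+d^2}+c)}+i(\operatorname{sign}d)\sqrt{\tfrac12(\sqrt{c^2+d^2}-c)}$ to get numerical bounds $\operatorname{Re}\tilde r>\tfrac{16}{25}m-\tfrac12$ and $|\operatorname{Im}\tilde r|\le 11/\sqrt{68}$, and then bounds
\[
\Bigl|1-\frac1w\Bigr|=\sqrt{1-\frac{2x-1}{x^2+y^2}}
\]
by inserting $x=\operatorname{Re}w>\tfrac{16}{25}$, $x\le\tfrac{23}{16}$, $|y|<\tfrac{6}{13}$. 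This yields only $\vartheta<\sqrt{2162577/2465425}<15/16$, and the sign determination is done the same way, by checking $(\operatorname{Re}(1+S_1))(\operatorname{Re}\tilde r)+(\operatorname{Im}(1+S_1))(\operatorname{Im}\tilde r)>\tfrac13$.

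Your route via $1-\tfrac1w=\delta/(1+\delta)$ and the observation that the linear $\beta_1$--contribution from $\tfrac{m-1}{m}\cdot\tfrac{W}{2}$ exactly cancels $\beta_1/m$ is sharper and more conceptual: it makes the local cubic convergence visible ($\delta=O(\varepsilon^2)$) and gives a contraction factor near $0.15$ rather than $0.94$, so the constant $\lambda=\tfrac{15}{16}$ is met with enormous slack. The paper's method, by contrast, uses nothing beyond crude real/imaginary part bounds and is closer to a direct numerical verification; it never sees the cancellation, which is why its $\lambda$ is so loose. For the a~priori bound on $\min_\varrho\mu_\varrho$ the paper writes out explicit Newton--Girard style recursions for the power sums $\sigma_j$, $\sigma'_j$ and then the $d_j$, whereas you invoke the Cauchy root bound on the reciprocal polynomial and symmetry (equivalently the resultant identity); both reach the same inequality.
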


\begin{proof}

From $p(z)=\prod\limits_{k=1}^{l}(z-\varrho_{k})^{m_{k}},$ where $m_{k}$ is 
the multiplicity of $\varrho_{k}$ for $k=1,\ldots,l,$ it follows that
\begin{align}\label{eq:Moe4}
		\begin{split}
				& q(z) = \frac{p'(z)}{p(z)}=\mathrm{\frac{d}{dz}}\ln{\:\!}p(z)=
				\mathrm{\frac{d}{dz}}\sum_{k=1}^{l}m_{k}\ln{\:\!}(z-\varrho_{k})=
				\sum_{k=1}^{l}\frac{m_{k}}{z-\varrho_{k}} \mbox{\, and}\\
				& t(z) = q^2(z)-\frac{p''(z)}{p(z)}=-\mathrm{\frac{d}{dz}}{\:\!}q(z)=
				\sum_{k=1}^{l}\frac{m_{k}}{(z-\varrho_{k})^2}.
		\end{split}
\end{align}
Without loss of generality, $\varrho$ can be chosen as $\varrho_{1}$ 
with $m_{1}=1.$ With the abbreviations
\begin{align}\label{eq:Moe5}
		\begin{split}
				& S_{1}=S_{1}(u_{0}):\,=\sum_{k=2}^{l}m_{k}\frac{u_{0}-\varrho}{u_{0}-
				\varrho_{k}},\ S_{2}=S_{2}(u_{0}):\,=\sum_{k=2}^{l}m_{k}\left(\frac{u_{0}-
				\varrho}{u_{0}-\varrho_{k}}\right)^2 \mbox{\, and}\\
				& w=w(u_{0}):\,=\lfrac{1}{m}\Big(1+S_{1}+\tilde{s}\,\sqrt{(m-1)\big(m{\;\!}
				(1+S_{2})-(1+S_{1})^2\big)}{\;\!}\Big),
		\end{split}
\end{align}
 where $\tilde{s}=\tilde{s}(u_{0})\in\{-1, 1\}$ is defined in the same way as 
 $s(u_{0}),$ we get
 \begin{equation}\label{eq:Moe6}
		 |u_{1}-\varrho|=|u_{0}-\varrho|\,\big|1-\lfrac{1}{w}\big|.
	\end{equation}
Since the condition $\big(\mathrm{Re}\,q(z)\big)\big(\mathrm{Re}\,r(z)\big)+
\big(\mathrm{Im}\,q(z)\big)\big(\mathrm{Im}\,r(z)\big)>0$ is equivalent to 
$|q(z)+r(z)|>|q(z)-r(z)|$ and since the latter relation doesn't change when 
each term is multiplied by the same non-zero factor, it follows that 
$\tilde{s}(u_{0})=s(u_{0}).$ First, it will be shown that \eqref{eq:Moe2} 
implies $\tilde{s}(u_{0})=1.$

From \eqref{eq:Moe2} we have $(2m-1)|u_{0}-\varrho|\le |\sigma - \varrho|=
|\sigma -u_{0}+u_{0}-\varrho|\le |u_{0}-\sigma|+|u_{0}-\varrho|$ for each 
$\sigma \in \mathrm{P}\setminus \{\varrho\},$ from which $\big|\lfrac{u_{0}-\varrho}
{u_{0}-\sigma}\big| \le \lfrac{1}{2m-2}$ follows. 
Therefore and with $\sum\limits_{k=2}^{l}m_{k}=m-1$ we get
\begin{equation}\label{eq:Moe7}
		|S_{1}|\le \lfrac{1}{2}  \mbox{\, and \,} |S_{2}|\le \lfrac{1}{4m-4}.
\end{equation}
With $s_{j}:\,=\mathrm{Re}\,S_{j}$ and $t_{j}:\,=\mathrm{Im}\,S_{j},\ j=1,2,$ 
\eqref{eq:Moe7} leads to
\begin{equation}\label{eq:Moe8}
		|s_{1}|\le \lfrac{1}{2},\ |t_{1}|\le \lfrac{1}{2},\  
		|s_{2}|\le \lfrac{1}{4m-4} \mbox{\, and \,} |t_{2}|\le \lfrac{1}{4m-4}.
\end{equation}
We evaluate the square root in \eqref{eq:Moe5} with the well-known formula
\begin{align}\label{eq:Moe9}
		\begin{split}
				& \sqrt{a+i\,b}=\sqrt{\lfrac{1}{2}\bigl(\sqrt{a^2+b^2}+a\bigr)}+i\,
		(\mathrm{sign}\,b)\sqrt{\lfrac{1}{2}\big(\sqrt{a^2+b^2}-a\big)}\\
				& \mbox{\, for \,} a,b\in\mathbb{R}  \mbox{\, and \,} b\ne 0.
		\end{split}
\end{align}
Since 
$\tilde{r}:\,=\sqrt{(m-1)\big(m{\;\!}(1+S_{2})-(1+S_{1})^2\big)}=\,:
\sqrt{(m-1)(c+i{\;\!}d)}$ with $c= \\
m-1+ms_{2}-2s_{1}-s_{1}^2+t_{1}^2$ and 
$d=mt_{2}-2t_{1}-2s_{1}t_{1},$ \eqref{eq:Moe9} gives
\begin{align*}
		\begin{split}
				& \mathrm{Re}\,\tilde{r}=\sqrt{\lfrac{m-1}{2}\big(\sqrt{c^2+d^2}+c\big)}  
		\mbox{\, and \,}\\
				& \mathrm{Im}\,\tilde{r}=(\mathrm{sign}\,d)\sqrt{\lfrac{m-1}{2}
		\big(\sqrt{c^2+d^2}-c\big)}.
		\end{split}
\end{align*}
From $0<m-\lfrac{31}{12}\le c \le m+\lfrac{7}{12}$ and $|d|\le 
\lfrac{11}{6}$ for $m\ge 4,$ we get
\begin{align}\label{eq:Moe10}
		\begin{split}
				& \sqrt{(m-1)\big(m-\lfrac{31}{12}\big)}\le \mathrm{Re}\,\tilde{r} \le 
		\sqrt{(m-1)\big(m+\lfrac{3}{2}\big)}<m+\lfrac{1}{4} 
		\mbox{\, and \,}\\
				& 
				|{\:\!}\mathrm{Im}\,\tilde{r}{\:\!}|=\sqrt{\frac{(m-1)d^2}
				{2\big(\sqrt{c^2+d^2}+c\big)}}\le 
				\frac{11}{12}\sqrt{\frac{m-1}{m-\lfrac{31}{12}}}\le 
				\frac{11}{\sqrt{68}}.
		\end{split}
\end{align}
With the abbreviations $\alpha:\,=\lfrac{16}{25},\ g:\,=\lfrac{43-12\alpha}
{24(1-\alpha^2)},\ h:\,=\lfrac{7}{3(1-\alpha^2)},$ starting from $m\ge 
g+\sqrt{g^2-h}=3.99639\ldots$ and passing $m^2-2gm\ge -h,$ we get from 
\eqref{eq:Moe10}
\begin{equation}\label{eq:Moe11}
		\mathrm{Re}\,\tilde{r}> \frac{16}{25}m-\frac{1}{2}  \mbox{\, for \,} m\ge 4.
\end{equation}
The estimates \eqref{eq:Moe8}, \eqref{eq:Moe10} and \eqref{eq:Moe11} lead to 
$\big(\mathrm{Re}\,(1+S_{1})\big)\big(\mathrm{Re}\ \tilde{r}\big)+
\big(\mathrm{Im}\,(1+S_{1})\big)\big(\mathrm{Im}\ \tilde{r}\big)\ge 
\lfrac{1}{2}\big(\lfrac{16}{25}{\:\!}m-\lfrac{1}{2}\big)-\lfrac{11}{2\sqrt{68}}
> \lfrac{1}{3}$ for $m\ge 4,$ which yields
\[\tilde{s}(u_{0})=s(u_{0})=1.\]
Next, continuing with \eqref{eq:Moe6}, we calculate an upper bound for 
$\vartheta:\,=\big|1-\lfrac{1}{x+i\,y}\big|$ with $x:\,=\mathrm{Re}\,w$ and 
$y:\,=\mathrm{Im}\,w.$ Since
\begin{equation}\label{eq:Moe12}
		\vartheta=\left|1-\frac{x-i\,y}{x^2+y^2}\right|=\sqrt{\Big(1-\frac{x}{x^2+y^2}
		\Big){}^{\rule[-2.5mm]{0mm}{1mm}2}+\Big(\frac{y}{x^2+y^2}\Big)
		{}^{\rule[-2.5mm]{0mm}{1mm}2}}=\sqrt{1-\frac{2x-1}{x^2+y^2}},
\end{equation}
we need a lower bound for $x$ and an upper bound for $x^2+y^2.$ From 
\eqref{eq:Moe8}, \eqref{eq:Moe10} and \eqref{eq:Moe11}, we obtain 
\begin{align*}
		\begin{split}
				& x=\lfrac{1}{m}(1+s_{1}+\mathrm{Re}\,\tilde{r})> 
				\lfrac{1}{m}\big(1-\lfrac{1}{2}+\lfrac{16}{25}m-\lfrac{1}{2}\big)= 
				\lfrac{16}{25},\\
				& x\le \lfrac{1}{m}\big(1+\lfrac{1}{2}+m+\lfrac{1}{4}\big)\le 
				\lfrac{23}{16}  \mbox{\, and \,}\\
				& |y|\le \lfrac{1}{m}\big(t_{1}+|\mathrm{Im}\,\tilde{r}|\big)\le 
				\lfrac{1}{4}\big(\lfrac{1}{2}+\lfrac{11}{\sqrt{68}}\big)< 
				\lfrac{6}{13}  
				\mbox{\, for \,} m\ge 4.
		\end{split}
\end{align*}
Therefore with \eqref{eq:Moe12}, we get
\begin{equation}\label{eq:Moe13}
		\vartheta < \sqrt{\frac{2162577}{2465425}} < \frac{15}{16} = \lambda.
\end{equation}
Now, we prove \eqref{eq:Moe3} by mathematical induction. With $n=0$ and 
$\big|1-\lfrac{1}{w}\big|=\vartheta<\lambda,$ \eqref{eq:Moe6} and 
\eqref{eq:Moe13} result in the basis of the induction
\[|u_{1}-\varrho|<\lambda\,|u_{0}-\varrho|.\]
As inductive step, we show that $|u_{h}-\varrho|<\lambda^h\,|u_{0}-\varrho|$ 
leads to $|u_{h+1}-\varrho|<\lambda^{h+1}\,|u_{0}-\varrho|,$ if $h$ is a 
generic number. Since $\lambda^h<1,$ using \eqref{eq:Moe2}, we have
\begin{equation}\label{eq:Moe14}
		|u_{h}-\varrho{\:\!}|< \lambda^h\,|u_{0}-\varrho{\:\!}|< \frac{1}{2m-1}\,
		\mu_{\varrho}.
\end{equation}
Therefore, in the above derivation, we may replace $u_{0}$ by $u_{h}$ and $u_{1}$ 
by $u_{h+1}.$ Then we get
\[|u_{h+1}-\varrho|<\lambda\,|u_{h}-\varrho|<\lambda^{h+1}\,|u_{0}-\varrho|.\]
Reasoning by induction, it follows that \eqref{eq:Moe3} is valid for all 
$n\in\mathbb{N}\setminus\{0\},$ which means that $L_{p}(u_{0})$ converges to the 
limit $\varrho.$

The a priori lower bound for the distances of the roots of $p(z)$ is 
derived in \cite{Moe:ALA} (p. 379). The coefficients of $D_{p}(z)$ 
are calculated with the aid of the sums of powers
\[\sigma_{j}:\,=\sum\limits_{k=1}^{m}\varrho_{k}^{\;\!j},\ 
j=1,\ldots,(m-1)m  \mbox{\, and \,} \sigma\:\!'_{\!j}:\,=\!\!\sum_{1\le i<k\le 
m}\!\!(\varrho_{i}-\varrho_{k})^{2j},\ j=1,\ldots,\lbin{m}{2}.\]
With the coefficients of 
$p(z)=\,:\sum\limits_{k=0}^{m-1}c_{k}z^k+z^m,$ we have
\[{\setlength{\arraycolsep}{1pt}
\sigma_{k}=\left\{
\begin{array}{rll}&-\!\sum\limits_{j=1}^{k-1}c_{m-j}\sigma_{k-j}-kc_{m-k}\ & 
		\text{for $1\le k\le m$ and}\\
		&-\!\sum\limits_{j=1}^{m}c_{m-j}\sigma_{k-j}& \text{for $m< k\le (m-1)m.$}
\end{array}\right.}\]
Expanding the powers in $\sigma\:\!'_{\!j},$ we get
\[\sigma\:\!'_{\!j}=m\sigma_{2j}+\sum\limits_{l=1}^{j-1}(-1)^l\lbin{2j}{l}
\sigma_{l}\,\sigma_{2j-l}+(-1)^j\lbin{2j-1}{j-1}\sigma_{j}^2  \mbox{\, 
for \,} j=1,\ldots,\lbin{m}{2}.\]
Reversing the first case of the above recursion for $\sigma_{k}$ with 
$\sigma\:\!'_{\!k}$ instead of $\sigma_{k}$ and replacing $c_{m-j}$ 
with $d_{\binom{m}{2}-j},$ we obtain
\[d_{\binom{m}{2}-k}=-\lfrac{1}{k}\sum\limits_{j=0}^{k-1}d_{\binom{m}{2}-j}
\sigma\:\!'_{\!k-j} \mbox{\, for \,} k=1,\ldots,\lbin{m}{2}.\]
The lower one of the estimates in \cite{Moe:ALA} (p. 375, formula (100)) applied 
to $D_{p}(z)$ gives
\[\Big(1+\lfrac{1}{|d_{0}|}\max\limits_{1\le j\le \binom{m}{2}}|d_{j}|
\Big)^{-\frac{1}{2}}<\min\{\mu_{\varrho}\mid \varrho\in\mathrm{P}\}.\]
Since $D_{p}(0)=0$ if and only if $d_{0}=0,$ all roots of $p(z)$ are 
different if $d_{0}\ne 0.$
\end{proof}

These sufficient criteria for the convergence of Laguerre sequences can be 
reliably fulfilled for each root with the aid of the SPA, which was 
introduced in the book \cite{Moe:ALA}. A revised version was published in
\cite{Moe:SP1} and \cite{Moe:SP2}. Therefore we will only sketch how 
the reliability is gained and which of the used methods are also 
valuable for Laguerre's method.

The SPA starts at a present version of D. Bernoulli's method which 
uses quotients of successive sums of powers of the roots to approximate 
dominant roots. We have proved two new basic results for the sequence of 
the quotients which we call ``Bernoulli sequence'', namely, a recursion 
formula and a representation of the minimal absolute value of the roots 
only using the terms of the Bernoulli sequence.

In the case of (nearly) equimodular roots, the second result is combined 
with the search for local minima of the absolute values of the polynomial 
with arguments on a circle which has a good approximation of the minimal 
modulus as radius. All these minima are close to roots, and at least one 
of these roots lies inside the circle which we call ``minimum circle''.

If necessary, this construction is repeated with ``chained minimum 
circles'' or with ``modified Tur\'{a}n circles'' which have radii 
forming a null sequence. Therefore, in any case, after finitely many 
steps a root can be approximated with prescribed accuracy.

Error bounds and stopping criteria are obtained with ``Laguerre disks'' 
\cite{Moe:LAG}
\begin{align*}
		\begin{split}
				& \mathcal{L}_{u}:\,=\{w\in\mathbb{C}\:|\:|w-u+m\,p(u)/\big(2\,p'(u)
\big)|\le m\,|\,p(u)/\big(2\,p'(u)\big)|\}\\
				& \mbox{for \,} u\in\mathbb{C} \mbox{\, with \,} p'(u)\ne 0.
		\end{split}
\end{align*}
Each Laguerre disk contains at least one root $\varrho\in\mathrm{P},$ 
the radius of $\mathcal{L}_{u}$ is smaller than $m|u-\varrho|$ if 
$2m|u-\varrho|\le\min\{\mu_{\varrho}\mid \varrho\in\mathrm{P}\},$ and in this 
case, there is only one root in $\mathcal{L}_{u}.$ With Laguerre disks the SPA 
reliably separates all roots in disjoint disks. Using the distances of such 
disks as lower bounds for $\mu_{\varrho}$ in \eqref{eq:Moe2}, the above Theorem 
guarantees that the SPA can be terminated with Laguerre's method.

Up to now, computer programs for Laguerre's method use values of the 
polynomial in the stopping criteria (see, for example, the NAG 
routine presented in \cite{Moe:MEK} and the C program in 
\cite{Moe:PTVF}), and, if at all, there are only complicated error 
bounds. Now, with the above Theorem and with Laguerre disks, we have 
precise stopping criteria and error bounds also for Laguerre's method.

If a preset bound for the step numbers is exceeded using the very good but 
unproved convergence properties visualized in the next section, then 
the SPA will serve as a ``security net''. In this way, Laguerre's 
method together with the SPA is not only highly efficient but also 
reliable. Therefore we call this combination ``Laguerre and SP Algorithm'' 
(LaSPA). The connection with Euler's investigations in his famous book 
``Introductio in Analysin Infinitorum'' (Introduction to Analysis of the 
Infinite) is explained in \cite{Moe:SP1} and \cite{Moe:SP2}.

\section{Visualization of convergence properties}\label{S:Vcp}

The fundamental theorem of algebra constitutes a one-to-one correspondance 
between finite sets of complex numbers and normalized polynomials with simple 
roots. Therefore, in \eqref{eq:Moe1} we may use the representations of $q(z)$ 
and $t(z)$ by the terminating sums in \eqref{eq:Moe4} to get Laguerre sequences 
only depending on the roots and the iterated values. Since, in that way, it is 
very easy to fulfill the condition of \eqref{eq:Moe2}, we have written a Cython 
program ``Laguerre.pyx'' to visualize convergence properties of Laguerre's 
method in squares which can be freely chosen parallel to the axes in the complex 
plane. Our first goal was to check the statement of the following sentence 
in the introductory paragraph of the lemma ``Laguerre's method'' in the English 
part of Wikipedia: \raisebox{0.5mm}{${\scriptstyle\ll}$}One of the most useful 
properties of this method is that it is, from extensive empirical study, very 
close to being a ``sure-fire'' method, meaning that it is almost guaranteed to 
always converge to \emph{some} root of the polynomial, no matter what initial 
guess is chosen.\raisebox{0.5mm}{${\scriptstyle\gg}$}

The program consists of 150 lines. We will only shortly explain it, because it 
is available in the section ``English subjects'' of \cite{Moe:MK}, and it is 
commented in \cite{Moe:Rep}. The Sage system \cite{Moe:Sage} is needed to run the 
program, because Laguerre.pyx uses graphics from Sage, and Cython, after being 
translated to C, is compiled by the GNU C compiler included in Sage.

The program takes six items as input: a list of different complex numbers (the 
roots), the midpoint of a non-rotated square in the complex plane, the length of 
the sides of this square, the number $m$ of starting values in each row and column 
with equal distances inside the square, a number for the image to be saved, and the 
extension of the image file (eps, pdf, png and 3 more). If 0 is entered as the 
length of the sides, a ``standard square'' will be used which has the same midpoint 
as the smallest non-rotated rectangle containing all roots, and the length of the 
sides is set to the double of the maximal length of the sides of the rectangle.

The program mainly consists of three parts. At the beginning, most of the 
parameters get their initial values, and the squared minimal distance of each 
root from the other roots is calculated. In the central part, for each of the 
$m^2$ starting values the terms of the corresponding Laguerre sequence are 
computed until one of the conditions in \eqref{eq:Moe2} is fulfilled or a preset 
bound for the step number is succeeded. In the first case, the starting value 
is stored in two lists, namely one belonging to the root determined by 
\eqref{eq:Moe2}, and the other assigned to the step number. In the second case, 
the starting value is recorded as an indication for a cyclic Laguerre sequence.

The last part organizes the output which consists of two figures and a list, 
reporting for each occurring step number how many starting values need this 
number of steps to fulfill the condition of \eqref{eq:Moe2}. The figures will be 
explained and analysed using an example with five roots and 250000 starting 
values which we will call ``starting points'' in the following.

Both figures contain five white areas which represent the disks determined by 
\eqref{eq:Moe2}. We call each of these disks ``safe'', because a Laguerre 
sequence with a term in such an area converges safely to the root in the centre 
of the disk. The list of the roots is $[\;\!1.6-0.55j,\ -0.39+0.03j,\ 
-2.32+2.17j,\ 0.2-1.06j,\ -0.02-0.27j\;\!],$ where the terminating $j$ or $J$ 
is the notation of Cython for the imaginary part of complex numbers. All starting 
points in the rest of the figures have coloured neighbourhoods which are touching 
disks in the case of PDF files and small squares with shading colours for the 
pixel graphics of PNG output which can be converted to EPS format.

On a computer with a 2.4 GHz processor, the time for translating to C and compiling 
was 10 seconds, the data were calculated in 8 seconds, and after 110 seconds 
the graphics for two times the starting points outside the safes appeared. 
Since with 90000 starting points the latter time was 50 seconds, the program 
can also be used to produce short motion picture sequences, for example, to 
visualize the effect of moving zeros or for zooming.

The coloured areas in \textsc{Figure} \ref{Moeller1} are named ``limit areas'', 
because the neighbourhood of the starting point of a convergent Laguerre sequence 
gets the colour of the area surrounding the safe which contains the limit of the 
sequence. It is one of the hypotheses formulated afterwards, that these points 
indeed are not isolated.

\begin{figure}[bht]
		\centering
		\includegraphics*{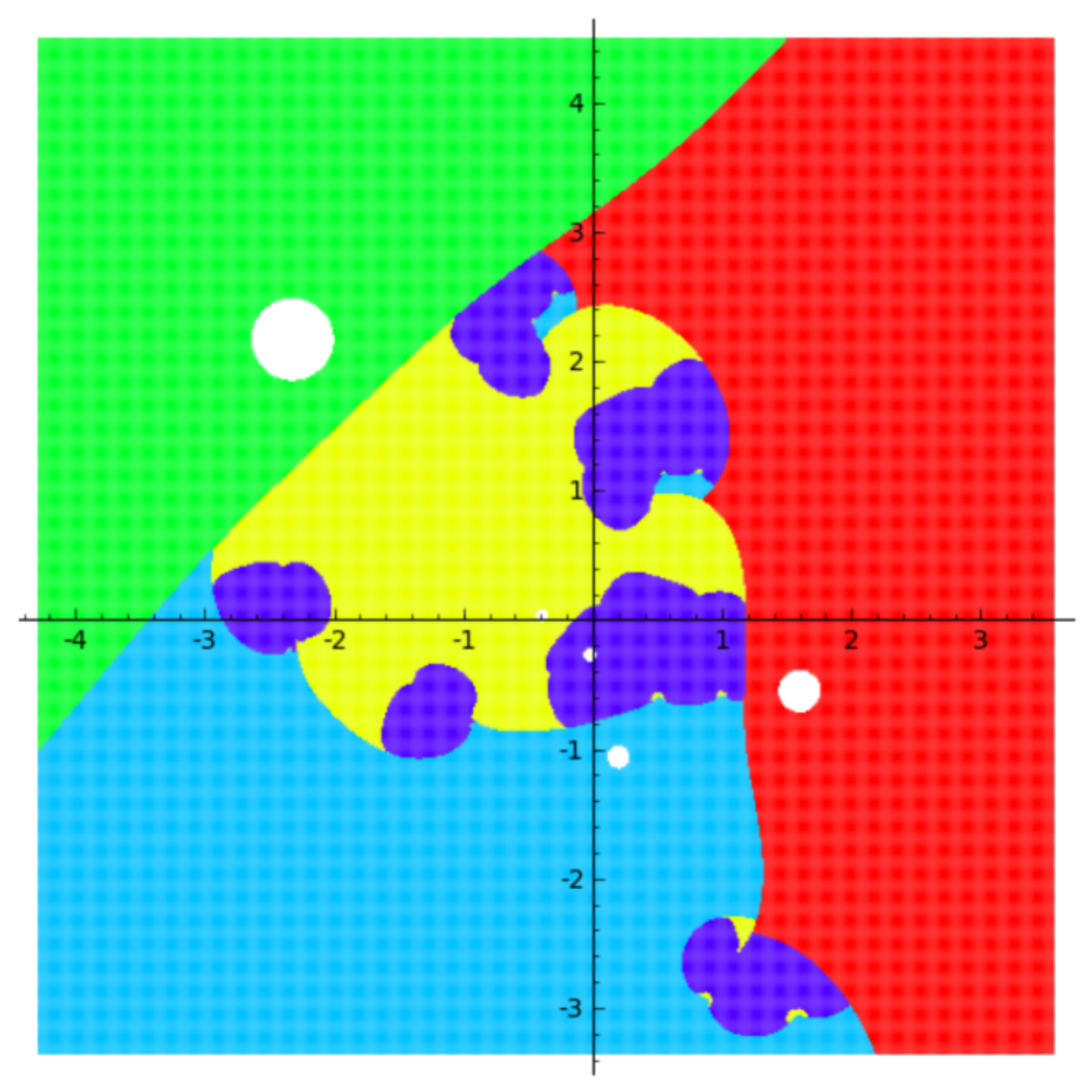}
		\caption{Safes (white disks) and limit areas}
		\label{Moeller1}
\end{figure}

In \textsc{Figure} \ref{Moeller2}, the neighbourhood of the same starting 
points as in \textsc{Figure} \ref{Moeller1} is coloured according to the 
number of steps until the corresponding Laguerre sequence reaches a safe. 
Therefore, the areas with the same colour are called ``step areas''. If the 
occurring colours are listed in the order of the rainbow from red to violet, 
then the index of the colour is the step number. For example, the area 
surrounding a safe is always red, because one step is needed to enter the safe. 
Moreover, the second figure shows a trajectory from a starting point 
to a safe with the maximal number of steps, if the trajectory completely lies in 
the square.\vspace{-1mm}

The arithmetic mean of the roots is given by $\alpha:\,=-\lfrac{1}{m}c_{m-1}.$ 
If it fulfills $\sum\limits_{k=1}^{m}\lfrac{1}{\alpha-\varrho_{k}}\ne 0$ or
$\sum\limits_{k=1}^{m}\lfrac{1}{\alpha-\varrho_{k}}\ne\sum\limits_{k=1}^{m}
\lfrac{1}{(\alpha-\varrho_{k})^2},$ which means that $q(\alpha)+s(\alpha)
\,r(\alpha)\ne 0,$ and if it lies\vspace{-1mm}

\begin{figure}[b]
		\centering
		\includegraphics*{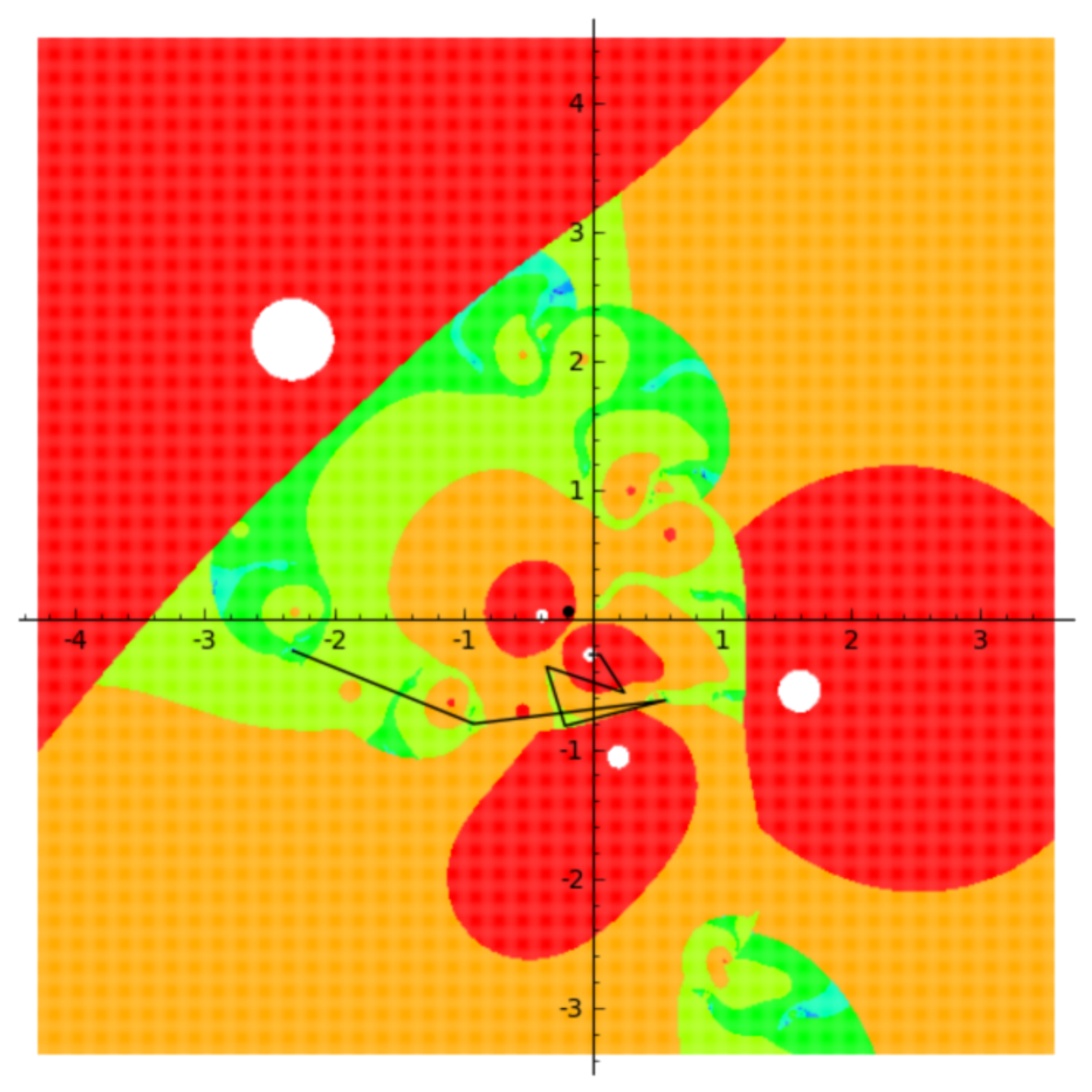}
		\caption{Safes, step areas, a trajectory with maximal step number 7, and
		the arithmetic mean of the roots (black point)}
		\label{Moeller2}
\end{figure}

in the square, then it is shown in the second 
figure as a black point, because in most cases it is an optimal starting 
point.

The possibility of selecting arbitrary squares turns the program into a tool. 
But here, having compared many different figures, we can only state several 
hypotheses and a conjecture concerning the convergence properties of Laguerre's 
method. The corresponding figures with comments and the description of modified 
programs are contained in a report \cite{Moe:Rep}.

\begin{itemize}
		\item  The complex numbers as possible starting points can be divided into 
		four types: inner points, boundary points, ``cycle points'' which generate 
		cyclic Laguerre sequences and ``singularity points'' which as starting points 
		are excluded by definition, because they are the multiple zeros of $p'(z).$ 
		
		\item  The inner points and the boundary points constitute the limit areas and 
		the step areas which both have boundaries consisting of piecewise smooth curves. 
		If there are no singularity points, then a number $S_{p}$ exists such that the 
		starting points of same type with modulus greater than $S_{p}$ form unbounded 
		and simply connected areas. Moreover, each corresponding part of the boundary 
		curve of a limit area or a step area is asymptotic to a straight line.
		
		\item  The cycle points and the singularity points are the only isolated 
		points. 
		
		\item  The boundary curve of each step area belongs completely or piecewise 
		to one of the neighbouring areas.

		\item  If $\zeta$ is a singularity point or a cycle point, then for each 
		number $N\in\mathbb{N}$ there exists a disk $\Delta_{N}$ with centre $\zeta$ 
		such that the step numbers of all Laguerre sequences with $u_{0}\in\Delta_{N}
		\setminus\{\zeta\}$ are greater than $N.$ 
		
		\item  All Laguerre sequences are bounded. This shall be a conjecture, because, 
		possibly, it can be proved with a method similar to that used for the Theorem 
		above. Namely, in this way, it is easy to show that $\lfrac{u_{1}}{u_{0}}$ tends 
		to 0 if $u_{0}$ increases unboundedly.
\end{itemize}

One of the modified programs verifies that it is important to use the sign in the 
denominator of Laguerre's method, because without this condition the limit 
areas and the step areas become highly fragmented. A second program is designed 
for the positioning of very small squares using the list with the occurring 
step numbers and turning off the graphics. In this way,\vspace{1mm} for the polynomial 
$p(z) = z^5$ + (0.93 - 0.32\,$i$)$z^4$ - (0.5818 - 5.8351\,$i$)$z^3$\vspace{1mm} - 
(6.250301 + 1.40811\,$i$)$z^2$ - (1.61432911 + 2.57707253\,$i)z$ + 0.306917325 - 
0.531750585\,$i,$ which\vspace{1mm} corresponds to the\vspace{1mm} above root list,  
the (hypothetical) cycle (0.063005\ldots + 0.1196569\ldots$i$, 
-0.327219\ldots - 0.4305399\ldots$i$) of length 2 is determined.

\end{document}